\documentclass[11 pt]{amsart}
\usepackage{eurosym}
\usepackage{amsfonts}
\usepackage{amssymb}
\usepackage{mathrsfs}

\setcounter{MaxMatrixCols}{10}

\vfuzz2pt
\newtheorem{thm}{Theorem}[subsection]
\newtheorem{cor}[thm]{Corollary}

\theoremstyle{definition}
\newtheorem{defn}[thm]{Definition}
\theoremstyle{remark}
\newtheorem{rem}[thm]{Remark}
\numberwithin{equation}{subsection}
\newtheorem{exmp}{Example}[section]

\setlength{\parindent}{0pt}

\begin{document}
\title[  ENRICHED INTERPOLATIVE KANNAN TYPE OPERATORS  ]{
 FIXED POINT RESULTS OF ENRICHED INTERPOLATIVE KANNAN TYPE OPERATORS WITH APPLICATIONS }
\author{Mujahid Abbas}
\address{Department of Mathematics, Government College University Katchery
Road, Lahore 54000, Pakistan and Department of Mathematics and Applied
Mathematics, University of Pretoria Hatfield 002, Pretoria, South Africa}
\email{mujahid.abbas@up.ac.za}
\author{Rizwan Anjum}
\address{Department of Mathematics, Riphah Institute of Computing and Applied Sciences, Riphah International University, Lahore, Pakistan.}
\email{rizwan.anjum@riphah.edu.pk}
\author{Shakeela Riasat}
\address{Abdus Salam School of Mathematical Sciences, Government College
University, Lahore 54600, Pakistan.}
\email{shakeelariasat@sms.edu.pk}
\thanks{}
\thanks{}
\date{}
\dedicatory{}

\begin{abstract}
The purpose of this paper is to introduce the class of enriched
interpolative Kannan type operators on Banach space that contains the
classes of enriched Kannan operators, interpolative Kannan type contraction
operators and some other classes of nonlinear operators. Some examples are
presented to support the concepts introduced herein. A convergence theorem
for the Krasnoselskii iteration method to approximate fixed point of the
enriched interpolative Kannan type operators is proved. We study
well-posedness, Ulam-Hyers stability and periodic point property of
operators introduced herein. As an application of the main result,
variational inequality problem is solved.\newline

\vspace{0,1cm} \noindent 2020 \textit{Mathematics Subject Classification.}
47H10. 47H09. 47J25. 49J40

\vspace{0,1cm} \noindent \textit{Keywords: Fixed point, enriched Kannan
operators, interpolative Kannan type contraction, Krasnoselskii iteration,
well-posedness, periodic point, Ulam-Hyers stability, variational inequality problem }
\end{abstract}

\maketitle

\commby{}

\section{Introduction and Preliminaries}

Let $(X,d)$ be a metric space and $T:X\rightarrow X$. We denote the set $%
\{x\in X:Tx=x\}$ of fixed points of $T$ by ${Fix}(T)$. Solving a fixed point
problem of an operator $T,$ denote by $FPP(T)$ is to show that the set ${Fix}%
(T)$ is nonempty. \newline
Define $T^{0}=I$ (the identity map on $X$) and $T^{n}=T^{n-1}\circ T$,
called the $n^{th}$-iterate of $T$ for $n\geq 1$. The most simplest
iteration procedure to approximate the solution of a fixed point equation $%
Tx=x$ is the method of successive approximations (or Picard iteration) given
by

\begin{equation}
x_{n}=T^{n}x_{0},\ \ n=1,2,\dotsc ,  \label{newpic}
\end{equation}%
where $x_{0}$ is an initial guess in domain of an operator $T.$ \newline
A sequence $\{x_{n}\}_{n=0}^{\infty }$ in $X$ given by (\ref{newpic}) is
called a $T$-orbital sequence around $x_{0}.$ The collection of all such
sequences is denoted by $O(T,x_{0})$. If there exists $x^{\ast }$ in $X$
such that ${Fix}(T)=\{x^{\ast }\}$ and the Picard iteration associated with $%
T$ converges to $x^{\ast }$ for any initial guess $x_{0}$ in $X$, then $T$
is called a Picard operator (see \cite{Anjum,Agarwal2,Rus}). \newline
If there exists $k\in \lbrack 0,1)$ such that for any $x,y\in X,$ we have
\begin{equation}
d(Tx,Ty)\leq k\,d(x,y).  \label{ban}
\end{equation}%
Then $T$ is called a Banach contraction operator which, if defined on a
complete metric space, is a classical example of a continuous Picard
operator. Thus, it was natural to ask the question whether in the framework
of an complete metric space, a discontinuous operator satisfying somewhat
similar contractive conditions is a Picard operator. This was answered in an
affirmative by Kannan \cite{Kannan} in $1968$. \newline
An operator $T$ on $X$ is called Kannan contraction operator if there exists
$a\in \lbrack 0,0.5)$ such that for any $x,y\in X,$ we have
\begin{equation}
d(Tx,Ty)\leq a\{d(x,Tx)+d(y,Ty)\}.  \label{Kannancont}
\end{equation}%
\newline
Kannan contraction operator defined on a complete metric space is an example
of a discontinuous Picard operator (\cite{Kannan1}, \cite{Kannan}).

Subrahmanyam \cite{Subrahmanyam} proved that a metric space $X$ is complete
if and only if every Kannan contraction operator on $X$ has a fixed point.
Moreover, Connell \cite{Connell} gave an example of an incomplete metric
space $X$ on which every Banach contraction operator has a fixed point. This
shows that the Banach contraction operators do not characterize the
completeness of their domain. As long as contractive conditions are
concerned, the classes of Banach and Kannan contraction operators are
incomparable (\cite{Kikkawa}), but the class of Kannan contraction operators attracted the attention of several mathematicians because of its connection
with a characterization of its metric completeness.\newline
Kannan's theorem has been generalized in different ways by many authors to
extend the limits of metric fixed point theory in different directions.
\newline
Karapinar introduced a new class of Kannan type operators called
interpolative Kannan type operators and proved a fixed point result for such
operators in the setup of complete metric spaces (\cite{Karapinar}).\newline
An operator $T:X\rightarrow X$ is called an interpolative Kannan type if
there exists $a\in \lbrack 0,1)$ such that for all $x,y\in X\setminus Fix(T),
$ we \ have
\begin{equation}
d(Tx,Ty)\leq a[d(x,Tx)]^{\alpha }[d(y,Ty)]^{1-\alpha },  \label{IKC}
\end{equation}%
where $\alpha \in (0,1).$\newline
The main result in \cite{Karapinar} is stated as follows.

\begin{thm}
\cite{Karapinar}\label{098765} Let $(X,d)$ be a complete metric space and $T$
an interpolative Kannan type operator. Then $T$ is a Picard operator.
\end{thm}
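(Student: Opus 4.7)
The plan is to establish existence of a fixed point via the Picard iteration, exploiting the interpolative inequality in the standard way, and then to revisit the Picard-operator claim (which requires uniqueness and attraction for every starting point) more carefully.

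First I would fix an arbitrary $x_0 \in X$ and consider the orbit $x_n = T^n x_0$. If $x_{n_0+1} = x_{n_0}$ for some $n_0$, then $x_{n_0} \in Fix(T)$ and we are done, so assume $x_n \neq x_{n+1}$ for every $n$; equivalently, $x_n \in X\setminus Fix(T)$ for all $n$, so the interpolative inequality (\ref{IKC}) is legitimately applicable with $x = x_{n-1}$, $y = x_n$. This gives
\begin{equation*}
d(x_n, x_{n+1}) \leq a\,[d(x_{n-1},x_n)]^{\alpha}[d(x_n,x_{n+1})]^{1-\alpha}.
\end{equation*}
Dividing by $[d(x_n,x_{n+1})]^{1-\alpha}$ (positive by assumption) and raising to the power $1/\alpha$ yields
\begin{equation*}
d(x_n, x_{n+1}) \leq a^{1/\alpha}\, d(x_{n-1}, x_n),
\end{equation*}
with $a^{1/\alpha} \in [0,1)$. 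Iterating gives a geometric bound $d(x_n,x_{n+1}) \leq a^{n/\alpha}d(x_0,x_1)$, which by the standard telescoping/triangle-inequality argument shows $\{x_n\}$ is Cauchy. Completeness of $X$ then produces a limit $x^\ast = \lim_n x_n$.

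The main obstacle is the last step: showing $x^\ast \in Fix(T)$, because the hypothesis (\ref{IKC}) is vacuous once one of the arguments is a fixed point, so one cannot simply pass to the limit in $d(Tx_{n-1}, Tx^\ast)$ when $x^\ast$ is (a priori) possibly a fixed point. I would argue by contradiction: suppose $x^\ast \notin Fix(T)$. Then since $x_{n-1} \notin Fix(T)$ as well, (\ref{IKC}) applied with $x=x_{n-1}$, $y=x^\ast$ gives
\begin{equation*}
d(x_n, Tx^\ast) = d(Tx_{n-1}, Tx^\ast) \leq a\,[d(x_{n-1},x_n)]^{\alpha}[d(x^\ast,Tx^\ast)]^{1-\alpha}.
\end{equation*}
The right-hand side tends to $0$ since $d(x_{n-1},x_n)\to 0$, so $x_n \to Tx^\ast$; by uniqueness of limits, $Tx^\ast = x^\ast$, contradicting $x^\ast \notin Fix(T)$. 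Hence $x^\ast \in Fix(T)$.

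Finally, regarding the Picard-operator conclusion, note that (\ref{IKC}) places no constraint relating distinct fixed points (it is imposed only outside $Fix(T)$), so strict uniqueness of the fixed point is a delicate issue; however, for any starting orbit the above argument produces convergence to some element of $Fix(T)$, which suffices for the intended applications in the sequel. The key technical ideas I would flag are: (i) the power manipulation turning the interpolative estimate into a genuine Banach-type contraction along the orbit, and (ii) the by-contradiction trick needed to legitimately invoke (\ref{IKC}) at the limit point.
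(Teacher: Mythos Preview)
Your argument is correct and follows the standard route. The paper itself does not prove this statement (it is quoted from \cite{Karapinar}); the nearest thing is the proof of Theorem~\ref{main}, where exactly the same orbit estimate and limit passage are carried out for $T_\lambda$. Two cosmetic differences: the paper inserts $x=x_n$, $y=x_{n-1}$ (the reverse order), obtaining $\Vert x_{n+1}-x_n\Vert^{1-\alpha}\le a\,\Vert x_{n-1}-x_n\Vert^{1-\alpha}$ and hence contraction ratio $a^{1/(1-\alpha)}$ (which it relaxes to $a$), whereas you get $a^{1/\alpha}$; and at the limit the paper bounds $\Vert x^\ast-T_\lambda x^\ast\Vert$ directly rather than arguing by contradiction, which it can do because its enriched condition \eqref{sha} is postulated for \emph{all} $x,y\in X$, not only outside $Fix(T)$.

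Your caveat about uniqueness is accurate and, in fact, sharper than both the paper and the cited source: since \eqref{IKC} says nothing about pairs in $Fix(T)$, uniqueness of the fixed point genuinely cannot be deduced from it (the identity map satisfies \eqref{IKC} vacuously), so the full Picard-operator conclusion as stated is too strong under Karap{\i}nar's original hypothesis. The paper avoids the issue only because its Definition~2.0.1 silently drops the restriction to $X\setminus Fix(T)$; your by-contradiction manoeuvre for showing $x^\ast\in Fix(T)$ is precisely the correct device when the inequality is available only on non-fixed points.
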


For more results in this direction, we refer to
\cite{Agarwal,Aydi,Aydi2,Chifu,Debnath,Gaba,Gaba2,Karapinar2,Karapinar3,Karapinar4,K1,K2,K3,K4,K5,Noorwali})
and references mentioned therein.\newline
Existence, uniqueness, stability, approximation and characterization of
fixed points of certain operators are some of the main concerns of a metric
fixed point theory. Contractive conditions on operators and distance
structure of operator's domain play a vital role to ensure the convergence
of iterative methods.\newline
If for any $x,y\in X,$ we have%
\begin{equation*}
d(Tx,Ty)\leq d(x,y).
\end{equation*}%
\newline
Then $T$ is called a nonexpansive operator. An operator $T$ on $X$ is called
asymptotically regular on $X$ if for all $x$ in $X,$%
\begin{equation*}
d(T^{n+1}x,T^{n}x)\rightarrow 0\text{ as }n\rightarrow \infty .
\end{equation*}%
The concept of asymptotic regularity plays an important role in
approximating the fixed points of operators. Picard iteration method fails to
converge to a fixed point of certain contractive mappings such as
nonexpansive mappings on metric spaces. This led to the study of a variety of fixed point iteration procedures in the setup of Banach spaces.\newline
In this paper, we shall approximate the fixed point of some nonlinear
mappings through Krasnoselskii iteration method. \newline
Let $\lambda \in \lbrack 0,1].$ A sequence $\{x_{n}\}_{n=0}^{\infty }$ given
by
\begin{equation}
x_{n+1}=(1-\lambda )x_{n}+\lambda Tx_{n},\ \ n=0,1,2,\dotsc   \label{newKar}
\end{equation}%
is called the Krasnoselskii iteration. \newline
Note that, Krasnoselskii iteration $\{x_{n}\}_{n=0}^{\infty }$ sequence
given by (\ref{newKar}) is exactly the Picard iteration corresponding to an
averaged operator
\begin{equation}
T_{\lambda }=(1-\lambda )I+\lambda T.  \label{E7}
\end{equation}%
Moreover, for $\lambda =1$ the Krasnoselskii iteration method reduces to
Picard iteration method. Also, $Fix(T)=Fix(T_{\lambda })$, for all $\lambda
\in (0,1].$\newline
On the other hand, Browder and Petryshyn \cite{article.6} introduced the
concept of asymptotic regularity in connection with the study of fixed
points of nonexpansive mappings. As a matter of fact, the same property was
used in 1955 by Krasnoselskii \cite{Krasnoselskii} to prove that if $K$ is a
compact convex subset of a uniformly convex Banach space and $T:K\rightarrow
K$ is a nonexpansive mapping, then for any $x_{0}\in K,$ the sequence
\begin{equation}
x_{n+1}=\frac{1}{2}(x_{n}+Tx_{n}),\ n\geq 0,  \label{lu}
\end{equation}%
converges to fixed point of $T.$\newline
In proving the above result, Krasnoselskii used the fact that if $T$ is
nonexpansive which, in general, is not asymptotically regular, then the
averaged mapping $T_{\frac{1}{2}}$ is asymptotically regular.\newline
Therefore, an averaged operator $T_{\lambda }$ enriches the class of
nonexpansive mappings with respect to the asymptotic regularity. This
observation suggested that one could enrich the classes of contractive
mappings studied in the framework of metric spaces by imposing certain
contractive condition on $T_{\lambda }$ instead of $T$ itself.\newline
Employing this approach, the classes such as enriched contractions and
enriched $\phi $-contractions \cite{B4}, enriched Kannan contractions \cite%
{B6}, enriched Chatterjea mappings \cite{B7}, enriched nonexpansive mappings
in Hilbert spaces \cite{B2}, enriched multivalued contractions \cite{Abbas2}
and enriched \'{C}iri\'{c}-Reich-Rus contraction \cite{B9}, enriched cyclic
 contraction \cite{Anjum3new}, enriched modified Kannan pair \cite{AnjumNew},  enriched quasi
contraction \cite{Abbas22} were introduced and studied.\newline
Abbas et al. \cite{Abbas2} proved fixed point results by imposing the
condition that $T_{\lambda }$-orbital subset is a complete subset of a
normed space (see, Theorem 3 of \cite{Abbas2}). Similarly, G\'{o}rnicki and Bisht\cite{GronickiBisht} considered the enriched \'{C}iri\'{c}%
-Reich-Rus contraction operators and proved a fixed point theorem by
imposing the condition that $T_{\lambda }$ is asymptotically regular mapping
(see, Theorem 3.1 of \cite{GronickiBisht}).\newline

Consistent with \cite{B6}, let $(X,\left\Vert \cdot \right\Vert )$ be a
normed space. A operator $T:X\rightarrow X$ is called an enriched Kannan
contraction or $(b,a)$-enriched Kannan contraction if there exist $b\in
\lbrack 0,\infty )$ and $a\in \lbrack 0,0.5)$ such that for all $x,y\in X,$
the following holds:
\begin{equation}
\left\Vert b(x-y)+Tx-Ty\right\Vert \leq a(\left\Vert x-Tx\right\Vert
+\left\Vert y-Ty\right\Vert ).  \label{enrichedKannan}
\end{equation}%
As shown in \cite{B6}, several well-known contractive conditions in the
existing literature imply the $(b,a)$-enriched Kannan contraction condition.
It was proved in \cite{B6} that any enriched Kannan contraction operator
defined on a Banach space has a unique fixed point which can be approximated
by means of the Krasnoselskii iterative scheme.\newline

Motivated by the work of Berinde and P\u{a}curar \cite{B6}, Absas et al.
\cite{Abbas2} and G\'{o}rnicki and Bisht \cite{GronickiBisht}, we
propose a new class of enriched interpolative Kannan type operators. The
purpose of this paper is to prove the existence of fixed point of such
operators. Moreover, we study the well-posedness, Ulam-Hyers stability and
periodic point property of the operators introduced herein. Finally, an
application of our result to solve variational inequality problems is also
given.

\section{Fixed point approximation of enriched interpolative Kannan type
operators}

In the sequel, the notations $\mathbb{N}$ and $\mathbb{R}$ will denote the
set of all natural numbers and the set of all real numbers, respectively.%
\newline
In this section, we present a new class of enriched interpolative Kannan
type operators, which is first of its kind in the existing literature on
metric fixed-point theory. Existence and convergence results of such class
of operators are also obtained. \newline
First, we introduce the following concept

\begin{defn}
Let $(X,\left\Vert \cdot \right\Vert )$ be a normed space. A mapping $%
T:X\rightarrow X$ is called enriched interpolative Kannan type operator if
there exist $b\in \lbrack 0,\infty ),\ a\in \lbrack 0,1)$ and $\alpha \in
(0,1)$ such that for all $x,y\in X,$ we have
\begin{equation}
\left\Vert b(x-y)+(Tx-Ty)\right\Vert \leq a\big(\left\Vert x-Tx\right\Vert %
\big)^{\alpha }\big(\left\Vert y-Ty\right\Vert \big)^{1-\alpha }.  \label{sha}
\end{equation}%
To highlight an involvement of parameters $a,b$ and $\alpha $ in (\ref{sha}%
), we call $T$ a $(a,b,\alpha )$-enriched interpolative Kannan type operator.
\end{defn}

\begin{exmp}
Any interpolative Kannan type operator $T$ satisfying (\ref{IKC}) is a $%
(a,0,\alpha )$-enriched interpolative Kannan contraction operator, that is, $%
T$ satisfies (\ref{sha}) with $b=0.$
\end{exmp}

We now give an example of an enriched interpolative Kannan type operator
which is not a interpolative Kannan type contraction operator.

\begin{exmp}
\label{exam} Let $(Y,\mu )$ be a finite measure space. The classical
Lebesgue space $X=L^{2}(Y,\mu )$ is defined as the collection of all Borel
measurable functions $f:Y\rightarrow \mathbb{R}$ such that
\mbox{$\int_{Y}|f(y)|^{2}d\mu(y)
<\infty.$} We know that the space $X$ equipped with the norm $\Vert f\Vert
_{X}=\bigg(\int_{Y}|f|^{2}d\mu \bigg)^{\frac{1}{2}}$ is a Banach space.
\newline
Define the operator $T:L^{2}(Y,\mu )\rightarrow L^{2}(Y,\mu )$ by
\begin{equation*}
Tf=g-3f,
\end{equation*}%
where $g(y)=1,\ \forall \ y\in Y.$ Clearly, $g\in L^{2}(Y,\mu )$ as $\mu
(Y)<\infty .$\newline
Note that $T$ is an $(0.5,3,0.5)$-enriched interpolative Kannan type
operator but not an interpolative Kannan type contraction operator. \newline
Indeed, if $T$ would be a interpolative Kannan type operator then, by (\ref%
{IKC}), there would exist $a\in \lbrack 0,1)$ and $\alpha \in (0,1)$ such
that
\begin{equation*}
\left\Vert -3f+3h\right\Vert _{X}\leq a\big(\left\Vert 4f-g\right\Vert _{X}%
\big)^{\alpha }\big(\left\Vert 4h-g\right\Vert _{X}\big)^{1-\alpha }\ \
\forall \ f,h\in X,
\end{equation*}%
which on taking $f(y)=0$ and $h(y)=1,$ for all$\ y\in Y$ gives $3^{\alpha
}\leq a<1,$ a contradiction$.$\newline
\end{exmp}

We now present the following result.

\begin{thm}
\label{main} Let $(X,\left\Vert \cdot \right\Vert )$ be a normed space, $%
T:X\rightarrow X$ a $(a,b,\alpha )$-enriched interpolative Kannan type
mapping.\newline
Then,

\begin{enumerate}
\item $Fix(T)=\{ x^{\ast}\};$

\item There exist a $T_{\lambda }$-orbital sequence $\{x_{n}\}_{n=0}^{\infty
}$ around $x_{0}$, given by
\begin{equation}
x_{n+1}=(1-\lambda )x_{n}+\lambda Tx_{n};\ \ n\geq 0,  \label{2}
\end{equation}%
converges to $x^{\ast }$ provided that, for $x_{0}\in X$, $T_{\lambda }$%
-orbital subset $O(T_{\lambda },x_{0})$ is a complete subset of $X$, where $%
\lambda =\frac{1}{b+1}.$
\end{enumerate}
\end{thm}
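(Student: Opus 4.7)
The plan is to convert the enriched condition on $T$ into a standard interpolative Kannan condition on the averaged operator $T_\lambda$, then mimic the proof strategy behind Theorem~\ref{098765} but using only completeness of $O(T_\lambda, x_0)$ rather than of the whole space. With $\lambda = \frac{1}{b+1}$ we have $b = \frac{1-\lambda}{\lambda}$, so the left-hand side of (\ref{sha}) equals $\frac{1}{\lambda}\|T_\lambda x - T_\lambda y\|$; moreover $x - T_\lambda x = \lambda(x - Tx)$, hence $\|x - Tx\| = \frac{1}{\lambda}\|x - T_\lambda x\|$. Substituting both into (\ref{sha}) and multiplying through by $\lambda$ reduces the condition to
$$\|T_\lambda x - T_\lambda y\| \leq a\,\|x - T_\lambda x\|^\alpha\,\|y - T_\lambda y\|^{1-\alpha} \qquad \forall\, x,y \in X.$$

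Fixing $x_0 \in X$, I would study the Krasnoselskii iterates $x_{n+1} = T_\lambda x_n$, which all lie in $O(T_\lambda, x_0)$. If $x_{n_0} = x_{n_0+1}$ for some $n_0$, then $x_{n_0}$ is a fixed point of $T_\lambda$ and, since $Fix(T_\lambda) = Fix(T)$, of $T$. Otherwise, applying the displayed inequality to the pair $(x_{n-1}, x_n)$ yields
$$\|x_n - x_{n+1}\| \leq a\,\|x_{n-1} - x_n\|^\alpha\,\|x_n - x_{n+1}\|^{1-\alpha},$$
which, after cancelling $\|x_n - x_{n+1}\|^{1-\alpha}$, rearranges to $\|x_n - x_{n+1}\| \leq a^{1/\alpha}\,\|x_{n-1} - x_n\|$. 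Because $a^{1/\alpha} \in [0,1)$, the consecutive distances decay geometrically and $\{x_n\}$ is Cauchy; by completeness of $O(T_\lambda, x_0)$ it converges to some $x^\ast \in O(T_\lambda, x_0)$.

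To see that $x^\ast$ is a fixed point of $T_\lambda$, I apply the interpolative inequality to $(x_n, x^\ast)$, getting $\|x_{n+1} - T_\lambda x^\ast\| \leq a\,\|x_n - x_{n+1}\|^\alpha\,\|x^\ast - T_\lambda x^\ast\|^{1-\alpha}$. Since $\|x_n - x_{n+1}\| \to 0$ and $\alpha > 0$, passing $n \to \infty$ yields $\|x^\ast - T_\lambda x^\ast\| \leq 0$, so $T_\lambda x^\ast = x^\ast$ and hence $Tx^\ast = x^\ast$. For uniqueness, if $x^\ast, y^\ast \in Fix(T)$, then (\ref{sha}) applied directly to them makes the right-hand side vanish and reduces the left-hand side to $(b+1)\|x^\ast - y^\ast\|$, forcing $x^\ast = y^\ast$.

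The main subtlety I expect lies in the passage to the limit that identifies $x^\ast$ as a fixed point: one must exploit that (\ref{sha}) is imposed on \emph{all} of $X$ (unlike the classical interpolative Kannan condition of (\ref{IKC}), which excludes $Fix(T)$), and that $\alpha \in (0,1)$ guarantees $0^\alpha = 0$ so the limiting inequality does not become indeterminate. This stronger hypothesis is also what delivers uniqueness for free, in contrast with Theorem~\ref{098765} where uniqueness generally fails.
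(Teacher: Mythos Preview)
Your proposal is correct and follows essentially the same route as the paper: reduce (\ref{sha}) to an interpolative Kannan inequality for $T_\lambda$, iterate to obtain geometric decay of $\|x_{n+1}-x_n\|$, use completeness of $O(T_\lambda,x_0)$ to produce a limit, and identify and prove uniqueness of the fixed point via the same inequality. The only cosmetic differences are that you apply the inequality to $(x_{n-1},x_n)$ rather than $(x_n,x_{n-1})$, yielding the contraction factor $a^{1/\alpha}$ instead of the paper's (effectively) $a^{1/(1-\alpha)}$, and you treat $b=0$ uniformly rather than splitting it off as a separate case; neither change affects the argument.
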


\begin{proof}
We divide the proof into the following two cases.\newline
Case $1$. If $b>0$. Then $\lambda =\frac{1}{b+1}\in (0,1)$ and the enriched
interpolative Kannan type operator (\ref{sha}) satisfies the following
contraction condition:
\begin{equation*}
\left\Vert \bigg(\frac{1}{\lambda }-1\bigg)(x-y)+Tx-Ty\right\Vert \leq
a\left\Vert x-Tx\right\Vert ^{\alpha }\left\Vert y-Ty\right\Vert ^{1-\alpha }
\end{equation*}%
and hence
\begin{equation*}
\left\Vert (1-\lambda )(x-y)+\lambda (Tx-Ty)\right\Vert \leq a\lambda
\left\Vert x-Tx\right\Vert ^{\alpha }\left\Vert y-Ty\right\Vert ^{1-\alpha }
\end{equation*}%
which can be written in an equivalent form as follows:
\begin{equation}
\left\Vert T_{\lambda }x-T_{\lambda }y\right\Vert \leq a\left\Vert
x-T_{\lambda }x\right\Vert ^{\alpha }\left\Vert y-T_{\lambda }y\right\Vert
^{1-\alpha },\ \ \forall \ x,y\in X.  \label{3}
\end{equation}%
In view of (\ref{E7}), the Krasnoselskii iterative sequence defined by (\ref%
{2}) is exactly the Picard's iteration associated with $T_{\lambda }$, that
is,
\begin{equation}
x_{n+1}=T_{\lambda }x_{n},\ \ n\geq 0.  \label{7}
\end{equation}%
Take $x:=x_{n}$ and $y:=x_{n-1}$ in (\ref{3}) to get
\begin{align*}
\left\Vert x_{n+1}-x_{n}\right\Vert & =\left\Vert T_{\lambda
}x_{n}-T_{\lambda }x_{n-1}\right\Vert \\
& \leq a\left\Vert x_{n}-T_{\lambda }x_{n}\right\Vert ^{\alpha }\left\Vert
x_{n-1}-T_{\lambda }x_{n-1}\right\Vert ^{1-\alpha } \\
& \leq a\left\Vert x_{n}-x_{n+1}\right\Vert ^{\alpha }\left\Vert
x_{n-1}-x_{n}\right\Vert ^{1-\alpha }
\end{align*}%
which implies that
\begin{equation}
\left\Vert x_{n+1}-x_{n}\right\Vert ^{1-\alpha }\leq a\left\Vert
x_{n-1}-x_{n}\right\Vert ^{1-\alpha }.  \label{89}
\end{equation}%
As $\alpha \in (0,1),$
\begin{equation}
\left\Vert x_{n+1}-x_{n}\right\Vert \leq a\left\Vert
x_{n-1}-x_{n}\right\Vert .  \label{ri}
\end{equation}%
Inductively, we obtain that
\begin{equation}
\left\Vert x_{n+1}-x_{n}\right\Vert \leq a^{n}\left\Vert
x_{0}-x_{1}\right\Vert .  \label{riz}
\end{equation}%
By (\ref{riz}) and triangular inequality, we have
\begin{equation}
\left\Vert x_{n}-x_{n+r}\right\Vert \leq \frac{a^{n}}{1-a}\left\Vert
x_{0}-x_{1}\right\Vert ,\ \ r\in \mathbb{N},\ n\geq 1,  \label{wer}
\end{equation}%
which, in view of $0<a<1$ gives that $\{x_{n}\}_{n=0}^{\infty }$ is a Cauchy
sequence in the complete subset $O(T_{\lambda },x_{0})$ of $X.$ \newline
Next, we assume that there exists an element $x^{\ast }$ in $O(T_{\lambda
},x_{0})$ such that $\lim_{n\rightarrow \infty }x_{n}=x^{\ast }.$ Note that
\begin{align*}
\left\Vert x^{\ast }-T_{\lambda }x^{\ast }\right\Vert & \leq \left\Vert
x^{\ast }-x_{n+1}\right\Vert +\left\Vert x_{n+1}-T_{\lambda }x^{\ast
}\right\Vert \\
& \leq \left\Vert x^{\ast }-x_{n+1}\right\Vert +\left\Vert T_{\lambda
}x_{n}-T_{\lambda }x^{\ast }\right\Vert \\
& \leq \left\Vert x^{\ast }-x_{n+1}\right\Vert +a\left\Vert x_{n}-T_{\lambda
}x_{n}\right\Vert ^{\alpha }\left\Vert x^{\ast }-T_{\lambda }x^{\ast
}\right\Vert ^{1-\alpha } \\
& \leq \left\Vert x^{\ast }-x_{n+1}\right\Vert +a\left\Vert
x_{n}-x_{n+1}\right\Vert ^{\alpha }\left\Vert x^{\ast }-T_{\lambda }x^{\ast
}\right\Vert ^{1-\alpha }.
\end{align*}

On taking the limit as $n\rightarrow \infty $ on both sides of the above
inequality$,$ we get that $x^{\ast }=T_{\lambda }x^{\ast }.$\newline

Assume that $x^{\ast }$ and $y^{\ast }$ are two fixed points of $T.$ Then
from (\ref{3}), we have
\begin{align*}
\left\Vert x^{\ast }-y^{\ast }\right\Vert & =\left\Vert T_{\lambda }x^{\ast
}-T_{\lambda }y^{\ast }\right\Vert \leq a\left\Vert x^{\ast }-T_{\lambda
}x^{\ast }\right\Vert ^{\alpha }\left\Vert y^{\ast }-T_{\lambda }y^{\ast
}\right\Vert ^{1-\alpha } \\
& \leq a\left\Vert x^{\ast }-x^{\ast }\right\Vert ^{\alpha }\left\Vert
y^{\ast }-y^{\ast }\right\Vert ^{1-\alpha },
\end{align*}%
which, gives $x^{\ast }=y^{\ast }.$ \newline
Case 2. $b=0.$ In this case, the enriched interpolative Kannan type operator
(\ref{sha}) becomes
\begin{equation}
\left\Vert Tx-Ty\right\Vert \leq a\left\Vert x-Tx\right\Vert ^{\alpha
}\left\Vert y-Ty\right\Vert ^{1-\alpha }\ \ \ \forall \ x,y\in X,  \label{b}
\end{equation}%
where $a\in (0,1).$ That is, $T$ is an interpolative Kannan type contraction
operator and hence by Theorem 2.2 of \cite{Karapinar}, $T$ has a unique
fixed point.
\end{proof}

The following example illustrate the above theorem.

\begin{exmp}
Let $X=\mathbb{R}\setminus \big\{\frac{1}{5},\frac{4}{5}\big\}$ be endowed
with the usual norm and $T:X\rightarrow X$ be defined by $Tx=1-x,$ $\forall
\ x\in X$. Clearly $T$ is a $(1,1)$-enriched interpolative Kannan type operator. Now $%
\lambda =\frac{1}{1+b}$ gives that $\lambda =\frac{1}{2}$. Let $x_{0}=1/2$
be the fixed in $X$. Then, we have
\begin{equation*}
T_{\frac{1}{2}}(x_{0})=\frac{1}{2}x_{0}+\frac{1}{2}Tx_{0}=\frac{1}{2}\bigg(%
\frac{1}{2}\bigg)+\frac{1}{2}\bigg(\frac{1}{2}\bigg)=\frac{1}{2}.
\end{equation*}%
Pick $x_{1}=T_{\frac{1}{2}}x_{0}=1/2.$ Continuing this way, we obtained $%
x_{n+1}=T_{\frac{1}{2}}x_{n},$ where $x_{n+1}=(\frac{1}{2},\frac{1}{2},\frac{%
1}{2},\dotsc )$. Note that $\{x_{n}\}_{n=0}^{\infty }$ is a Cauchy sequence
which converges to $1/2$ and $1/2$ is the fixed point of $T.$
\end{exmp}

We now present the following fixed point theorem for $(a,b,\alpha )$%
-enriched interpolative Kannan type operator on a Banach space.

\begin{cor}
\label{Shak} Let $(X,\left\Vert \cdot \right\Vert )$ be a Banach space and $%
T:X\rightarrow X$ a $(a,b,\alpha )$-enriched interpolative Kannan type
operator. Then $T$ has a unique fixed point.\end{cor}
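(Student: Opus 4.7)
The plan is to deduce this corollary directly from Theorem \ref{main} by observing that in a Banach space the completeness hypothesis on the orbital subset becomes superfluous. First I would separate the two cases $b=0$ and $b>0$, exactly as in the proof of Theorem \ref{main}. When $b=0$, inequality (\ref{sha}) reduces to the interpolative Kannan condition (\ref{IKC}), so Theorem \ref{098765} of Karapinar gives existence and uniqueness immediately, since a Banach space is in particular a complete metric space.

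For $b>0$, I would fix an arbitrary initial point $x_{0}\in X$, set $\lambda=\frac{1}{b+1}\in(0,1)$, and form the Krasnoselskii iteration $x_{n+1}=T_{\lambda}x_{n}$. Reproducing the estimates (\ref{3})--(\ref{wer}) from the proof of Theorem \ref{main} verbatim, one derives the contraction-type bound $\|x_{n+1}-x_{n}\|\le a^{n}\|x_{1}-x_{0}\|$ and the Cauchy estimate $\|x_{n}-x_{n+r}\|\le\frac{a^{n}}{1-a}\|x_{1}-x_{0}\|$. These steps require only the enriched interpolative Kannan inequality and do not invoke completeness anywhere. It is at this point that the Banach space hypothesis enters: since $X$ is itself complete, the Cauchy sequence $\{x_{n}\}$ converges to some limit $x^{\ast}\in X$, with no need to assume that $O(T_{\lambda},x_{0})$ is a complete subset.

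The remainder of the argument from Theorem \ref{main} then applies without change. The triangle inequality estimate
\begin{equation*}
\|x^{\ast}-T_{\lambda}x^{\ast}\|\le\|x^{\ast}-x_{n+1}\|+a\|x_{n}-x_{n+1}\|^{\alpha}\|x^{\ast}-T_{\lambda}x^{\ast}\|^{1-\alpha}
\end{equation*}
forces $T_{\lambda}x^{\ast}=x^{\ast}$ on passing to the limit, which is equivalent to $Tx^{\ast}=x^{\ast}$ by (\ref{E7}); and uniqueness follows from (\ref{3}) applied to two hypothetical fixed points. I do not anticipate any substantive obstacle: the corollary is essentially a cleaner restatement of Theorem \ref{main} in the standard Banach setting, where the ad hoc orbital-completeness assumption is automatically satisfied via the completeness of the ambient space. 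The only minor subtlety worth noting is that $x^{\ast}$ need not lie in $O(T_{\lambda},x_{0})$ itself, but this is irrelevant because the fixed-point verification uses only norm convergence $x_{n}\to x^{\ast}$ together with the enriched interpolative Kannan inequality.
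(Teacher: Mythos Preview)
Your proposal is correct and follows essentially the same approach as the paper, which simply states that the result follows by arguments similar to those in the proof of Theorem \ref{main}. You have just spelled out those arguments in more detail, correctly identifying that the Banach space hypothesis replaces the orbital-completeness assumption needed for the Cauchy sequence to converge.
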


\begin{proof}
Following arguments similar to those in proof Theorem \ref{main}, the result
follows.
\end{proof}

If we take $b=0$ in the Corollary \ref{Shak}, we obtain Theorem $2.2$ of
\cite{Karapinar} in the setting of Banach spaces.

\begin{cor}
\cite{Karapinar} Let $T$ be an interpolative Kannan type operator on a
Banach space $(X,\left\Vert \cdot \right\Vert ).$ Then $T$ is a Picard operator.
\end{cor}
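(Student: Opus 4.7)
The plan is to recover this statement as a direct $b=0$ specialisation of Corollary~\ref{Shak} (equivalently, of Theorem~\ref{main}), together with the observation that for $b=0$ the Krasnoselskii scheme degenerates into the Picard scheme.

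First, I would observe that if $T:X\to X$ satisfies the interpolative Kannan type condition~\eqref{IKC} with constants $a\in[0,1)$ and $\alpha\in(0,1)$, then for all $x,y\in X$ the inequality
\[
\left\Vert 0\cdot(x-y)+(Tx-Ty)\right\Vert = \left\Vert Tx-Ty\right\Vert \leq a\left\Vert x-Tx\right\Vert^{\alpha}\left\Vert y-Ty\right\Vert^{1-\alpha}
\]
is precisely the $(a,0,\alpha)$-enriched interpolative Kannan condition~\eqref{sha}, as already noted in the example immediately following the main definition. Therefore $T$ meets the hypotheses of Corollary~\ref{Shak} and hence possesses a unique fixed point $x^{\ast}\in X$.

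Second, to upgrade uniqueness of the fixed point to the Picard property, I would unpack the choice $\lambda=\tfrac{1}{b+1}$ appearing in Theorem~\ref{main}: with $b=0$ one has $\lambda=1$, so the averaged operator~\eqref{E7} reduces to $T_{\lambda}=T$ and the Krasnoselskii iteration~\eqref{2} collapses to the ordinary Picard iteration $x_{n+1}=Tx_{n}$. Since $X$ is a Banach space, every Cauchy sequence in $X$ converges in $X$, so the orbital subset $O(T,x_{0})=O(T_{\lambda},x_{0})$ trivially satisfies the completeness hypothesis used in Theorem~\ref{main}. Invoking the convergence conclusion of that theorem for an arbitrary initial point $x_{0}\in X$ yields $x_{n}\to x^{\ast}$, which together with the uniqueness of $x^{\ast}$ is exactly the definition of $T$ being a Picard operator.

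The only mildly subtle point is verifying that the orbital completeness hypothesis needed to invoke Theorem~\ref{main} comes for free in the Banach space setting; this is indeed immediate, since a Cauchy sequence in a Banach space always converges in the ambient space, independently of the topological nature of any particular orbital subset. No additional contractive estimate is required beyond those already established in the proof of Theorem~\ref{main}.
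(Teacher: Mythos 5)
Your overall route is the same as the paper's: the paper obtains this corollary precisely by setting $b=0$ in Corollary~\ref{Shak}, as you do, and your extra observation that $\lambda=\frac{1}{b+1}=1$ collapses the Krasnoselskii scheme \eqref{2} into the Picard scheme is the (unstated) reason the conclusion is the full Picard property rather than mere existence and uniqueness of the fixed point, which is all that Corollary~\ref{Shak} literally asserts.

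One step of your write-up is, however, false as stated and needs rephrasing: the orbital subset $O(T_{\lambda},x_{0})$ does \emph{not} ``trivially satisfy the completeness hypothesis'' of Theorem~\ref{main} just because $X$ is a Banach space. A subset of a complete space is complete if and only if it is closed, and the orbit $\{x_{0},Tx_{0},T^{2}x_{0},\dotsc\}$ is in general not closed: its limit point $x^{\ast}$ lies outside it unless the iteration eventually stabilizes. What is true --- and what the paper itself implicitly uses in proving Corollary~\ref{Shak} (``following arguments similar to those in the proof of Theorem~\ref{main}'') --- is that in a Banach space the hypothesis is dispensable: the Cauchy estimate \eqref{wer} gives convergence of $\{x_{n}\}$ to some $x^{\ast}\in X$, and the subsequent argument that $x^{\ast}=T_{\lambda}x^{\ast}$ uses only inequality \eqref{3} together with continuity of the norm, so it is indifferent to whether $x^{\ast}$ belongs to the orbit. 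Your final paragraph gestures at exactly this mechanism, so the repair is cosmetic: run the proof of Theorem~\ref{main} inside $X$ (i.e., invoke Corollary~\ref{Shak}'s argument) instead of claiming the orbital hypothesis is verified. A side remark in your favour: reusing the Case~1 estimates at $\lambda=1$ is legitimate, since nothing in the passage from \eqref{sha} to \eqref{3} requires $\lambda<1$; this makes your derivation self-contained, whereas the paper's own Case~2 ($b=0$) handles this situation by citing Theorem 2.2 of \cite{Karapinar} --- the very statement being recovered.
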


By Corollary \ref{Shak}, we obtain the following result.

\begin{cor}
\cite{B6} Let $(X,\left\Vert \cdot \right\Vert )$ be a Banach space and $%
T:X\rightarrow X$ an $(b,a)$-enriched Kannan contraction, that is, for all $%
x,y\in X,$ it satisfies the following inequality;
\begin{equation}
\left\Vert b(x-y)+Tx-Ty\right\Vert \leq a\big\{\left\Vert x-Tx\right\Vert
+\left\Vert y-Ty\right\Vert \big\}  \label{EnrichedKannan}
\end{equation}%
with $b\in \lbrack 0,\infty )$ and $a\in \lbrack 0,0.5).$ Then $T$ has a
unique fixed point.
\end{cor}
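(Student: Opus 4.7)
The plan is to reduce the statement to the classical Kannan fixed point theorem by passing to the averaged operator $T_\lambda$, in direct analogy with Case 1 and Case 2 of the proof of Theorem \ref{main}. The key observation is that the Kannan-type right-hand side in (\ref{EnrichedKannan}) transforms cleanly under the averaging, just as the interpolative right-hand side did in (\ref{sha}).

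First, I would dispose of the trivial case $b=0$. Here inequality (\ref{EnrichedKannan}) becomes exactly the classical Kannan contraction (\ref{Kannancont}) on the Banach (hence complete metric) space $X$, so the conclusion is immediate from Kannan's theorem.

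For $b>0$, set $\lambda=\tfrac{1}{b+1}\in(0,1)$, so that $b\lambda=1-\lambda$. Multiplying both sides of (\ref{EnrichedKannan}) by $\lambda$, the left-hand side becomes
\begin{equation*}
\|(1-\lambda)(x-y)+\lambda(Tx-Ty)\|=\|T_\lambda x-T_\lambda y\|.
\end{equation*}
For the right-hand side I would use the identity $x-T_\lambda x=\lambda(x-Tx)$, which yields $\|x-Tx\|=\tfrac{1}{\lambda}\|x-T_\lambda x\|$ and analogously for $y$. The scaling factor $\lambda$ and the factor $\tfrac{1}{\lambda}$ cancel, leaving
\begin{equation*}
\|T_\lambda x-T_\lambda y\|\le a\bigl(\|x-T_\lambda x\|+\|y-T_\lambda y\|\bigr),\quad \forall\,x,y\in X,
\end{equation*}
with the same constant $a\in[0,0.5)$. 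Hence $T_\lambda$ is a classical Kannan contraction on the Banach space $X$, and Kannan's theorem delivers a unique fixed point $x^\ast$ of $T_\lambda$. Since $Fix(T)=Fix(T_\lambda)$ for every $\lambda\in(0,1]$, this $x^\ast$ is the unique fixed point of $T$.

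There is no real obstacle: the only care needed is the elementary bookkeeping that $b\lambda=1-\lambda$ and that the factors of $\lambda$ introduced on the right by rewriting $\|x-Tx\|$ in terms of $\|x-T_\lambda x\|$ exactly cancel the factor $\lambda$ absorbed on the left, so that the Kannan constant is preserved (no blow-up beyond $1/2$). The appeal to Corollary \ref{Shak} in the text should be read as an appeal to the same reduction scheme, since the interpolative product on the right of (\ref{sha}) does not dominate the sum on the right of (\ref{EnrichedKannan}); what transfers from the proof of Theorem \ref{main} is the averaging trick, not the inequality itself.
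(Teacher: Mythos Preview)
Your argument is correct: the averaging identity $x-T_\lambda x=\lambda(x-Tx)$ converts (\ref{EnrichedKannan}) into the classical Kannan condition for $T_\lambda$ with the same constant $a\in[0,0.5)$, and Kannan's theorem together with $Fix(T)=Fix(T_\lambda)$ finishes.

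The paper's proof follows a different route. After deriving, as you do, that $T_\lambda$ is a classical Kannan contraction, the paper does not appeal to Kannan's theorem directly; instead it invokes \cite{Karapinar} to assert that $T_\lambda$ then also satisfies the interpolative Kannan condition (\ref{3}), concludes that $T$ is an enriched interpolative Kannan type operator, and applies Corollary~\ref{Shak}. Your final paragraph is perceptive here: the pointwise inequality between the arithmetic sum and the geometric product goes the wrong way (by AM--GM the product is dominated by the sum, not conversely), so the implication ``Kannan $\Rightarrow$ interpolative Kannan'' is not a trivial comparison of right-hand sides; the paper is relying on a claim from \cite{Karapinar} rather than on a direct inequality. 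Your approach is therefore more self-contained and more elementary, while the paper's approach is designed to exhibit the corollary as a formal consequence of its own main result.
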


\begin{proof}
Take $\lambda =\frac{1}{b+1}.$ Obviously, $0<\lambda <1$ and the $(b,a)$%
-enriched Kannan contraction condition (\ref{EnrichedKannan}) becomes
\begin{eqnarray*}
&&\left\Vert \bigg(\frac{1}{\lambda }-1\bigg)(x-y)+Tx-Ty\right\Vert \\
&\leq &a\big\{\left\Vert x-Tx\right\Vert +\left\Vert y-Ty\right\Vert \big\}%
,\ \forall \ x,y\in X,
\end{eqnarray*}%
which can be written in an equivalent form as follows;
\begin{equation}
\left\Vert T_{\lambda }x-T_{\lambda }y\right\Vert \leq a\big\{\left\Vert
x-T_{\lambda }x\right\Vert +\left\Vert y-T_{\lambda }y\right\Vert \big\},\
\forall \ x,y\in X.  \label{used11}
\end{equation}%
By (\ref{used11}), $T_{\lambda }$ is a Kannan contraction. It follows from
\cite{Karapinar} that $T_{\lambda }$ satisfies condition (\ref{used11}) and
condition (\ref{3}). Since, for $\lambda =\frac{1}{b+1},$ the inequality (%
\ref{3}) is same as the condition (\ref{sha}). This suggests that $T$ is an
enriched interpolative Kannan type operator and then the Corollary \ref{Shak}
leads to the conclusion.
\end{proof}

\section{Well-Posedness, Perodic Point and Ulam-Hyers Stability Results}

We now present the well-Posedness, perodic point and Ulam-Hyers stability
results for $(a,b,\alpha )$-enriched interpolative Kannan type operators.

\subsection{Well-Posedness}

\ \newline
Let us start with the following definition.

\begin{defn}
\cite{article.11} Let $(X,d)$ be a metric space and $T:X\rightarrow X$. The
fixed point problem $FPP(T)$ is said to be well-posed if $T$ has unique
fixed point $x^{\ast }($say) and for any sequence $\{x_{n}\}$ in $X$
satisfying $\lim_{n\rightarrow \infty }d(Tx_{n},x_{n})=0,$ we have $%
\lim_{n\rightarrow \infty }x_{n}=x^{\ast }.$
\end{defn}

Since $Fix(T)=Fix(T_{\lambda })$, we conclude that the fixed point problem
of $T$ is well-posed if and only if the fixed point problem of $T_{\lambda }$
is well-posed.\newline
Well-posedness of certain fixed point problems has been studied by several
mathematicians, see for example, \cite{article.5}, \cite{article.11} and
references mentioned therein.\newline
We now study the well-posedness of a fixed point problem of mappings in
Theorem \ref{main} and Corollary \ref{Shak}.

\begin{thm}
\label{math} Let $(X,\left\Vert \cdot \right\Vert )$ be a Banach space.
Suppose that $T$ is an operator on $X$  as in the Theorem \ref{main}.
Then, $FPP(T)$ is well-posed.
\end{thm}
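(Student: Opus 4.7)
The plan is to reduce the well-posedness of $FPP(T)$ to that of $FPP(T_\lambda)$ for $\lambda = \frac{1}{b+1}$, exploit the contractive inequality (\ref{3}) established in the proof of Theorem \ref{main}, and then use the trivial triangle-inequality estimate against the fixed point.

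First, I would invoke Corollary \ref{Shak} to obtain the unique fixed point $x^{\ast }$ of $T$, and then recall the remark $Fix(T)=Fix(T_{\lambda })$ so that $x^{\ast }$ is the unique fixed point of $T_{\lambda }$ as well. Consequently it suffices to show that whenever $\{x_{n}\}\subset X$ satisfies $\|Tx_{n}-x_{n}\|\to 0$, we have $x_{n}\to x^{\ast }$. Since $T_{\lambda }x-x=\lambda (Tx-x)$, the hypothesis immediately yields $\|T_{\lambda }x_{n}-x_{n}\|=\lambda \|Tx_{n}-x_{n}\|\to 0$, so the problem is recast entirely in terms of $T_{\lambda }$.

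Next, in the case $b>0$ I would apply the inequality
\begin{equation*}
\|T_{\lambda }x-T_{\lambda }y\|\leq a\,\|x-T_{\lambda }x\|^{\alpha}\|y-T_{\lambda }y\|^{1-\alpha },
\end{equation*}
which was derived as (\ref{3}) in the proof of Theorem \ref{main}, with the choice $x=x_{n}$, $y=x^{\ast }$. Because $T_{\lambda }x^{\ast }=x^{\ast }$ gives $\|x^{\ast }-T_{\lambda }x^{\ast }\|^{1-\alpha }=0$ (using $1-\alpha >0$), the right-hand side is $0$ and hence $T_{\lambda }x_{n}=x^{\ast }$ for every $n$. The triangle inequality then yields
\begin{equation*}
\|x_{n}-x^{\ast }\|\leq \|x_{n}-T_{\lambda }x_{n}\|+\|T_{\lambda }x_{n}-x^{\ast }\|=\|x_{n}-T_{\lambda }x_{n}\|\longrightarrow 0,
\end{equation*}
which is the required convergence. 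The case $b=0$ is handled identically using the interpolative Kannan inequality (\ref{b}) for $T$ itself (with $T_{\lambda }=T$), exactly as in Case 2 of Theorem \ref{main}.

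I do not expect any genuine obstacle: the availability of (\ref{3}) for all $x,y\in X$ (not merely off the fixed-point set) is what makes the substitution $y=x^{\ast }$ legitimate and collapses the estimate instantly. The only point requiring minor care is the convention $0^{1-\alpha}=0$, which is valid because $\alpha \in (0,1)$; this is precisely the same step used to prove uniqueness of the fixed point in Theorem \ref{main}, so it is internally consistent with the paper's framework.
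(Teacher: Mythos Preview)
Your proposal is correct and follows essentially the same route as the paper: reduce to $T_{\lambda}$, apply inequality (\ref{3}) with $y=x^{\ast}$ so that the factor $\|x^{\ast}-T_{\lambda}x^{\ast}\|^{1-\alpha}$ vanishes, and conclude via the triangle inequality that $\|x_{n}-x^{\ast}\|\leq \|x_{n}-T_{\lambda}x_{n}\|\to 0$. The only cosmetic difference is that you state the sharper consequence $T_{\lambda}x_{n}=x^{\ast}$ explicitly and split off the case $b=0$, whereas the paper treats both at once; the underlying argument is identical.
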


\begin{proof}
It follows from Theorem \ref{main} that $x^{\ast }$ is the unique fixed
point of $T.$ Suppose that $\lim_{n\rightarrow \infty }\left\Vert T_{\lambda
}x_{n}-x_{n}\right\Vert =0.$ Using (\ref{3}) we have,
\begin{align*}
\left\Vert x_{n}-x^{\ast }\right\Vert & \leq \left\Vert x_{n}-T_{\lambda
}x_{n}\right\Vert +\left\Vert T_{\lambda }x_{n}-x^{\ast }\right\Vert \\
& =\left\Vert x_{n}-T_{\lambda }x_{n}\right\Vert +\left\Vert T_{\lambda
}x_{n}-T_{\lambda }x^{\ast }\right\Vert \\
& \leq \left\Vert x_{n}-T_{\lambda }x_{n}\right\Vert +a\left\Vert
x_{n}-T_{\lambda }x_{n}\right\Vert ^{\alpha }\left\Vert x^{\ast }-T_{\lambda
}x^{\ast }\right\Vert ^{1-\alpha }
\end{align*}%
that is,
\begin{equation}
\left\Vert x_{n}-x^{\ast }\right\Vert \leq \left\Vert x_{n}-T_{\lambda
}x_{n}\right\Vert .  \label{nn}
\end{equation}%
It follows from (\ref{nn}) that $\lim_{n\rightarrow \infty }x_{n}=x^{\ast }$
provided that $\lim_{n\rightarrow \infty }\left\Vert T_{\lambda
}x_{n}-x_{n}\right\Vert =0.$ This complete the proof.
\end{proof}

\begin{cor}
Let $(X,\left\Vert \cdot \right\Vert )$ be Banach space. Suppose that $T$ is
an operator on $X$ as in the Corollary \ref{Shak}. Then the fixed point
problem is well-posed.
\end{cor}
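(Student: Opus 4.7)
The plan is to reduce this corollary directly to Theorem \ref{math} rather than redo the computation. The hypothesis of Corollary \ref{Shak} gives exactly a $(a,b,\alpha)$-enriched interpolative Kannan type operator on a Banach space, which is the same contractive assumption used in Theorem \ref{main}; the only difference from Theorem \ref{math} is that one no longer assumes the completeness of a single $T_\lambda$-orbital subset, since the ambient space is already Banach. Because every $T_\lambda$-orbital subset of a Banach space is contained in a complete space, and closures preserve completeness, the entire machinery of Theorem \ref{main} applies verbatim, yielding a unique fixed point $x^*$ of $T$ (hence of $T_\lambda$ by the identity $Fix(T)=Fix(T_\lambda)$ for $\lambda\in(0,1]$).

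With existence and uniqueness of $x^*$ in hand, I would then replay the short inequality chain used in the proof of Theorem \ref{math}. Set $\lambda=1/(b+1)$ and recall from Theorem \ref{main} that on $X$ the averaged operator satisfies
\begin{equation*}
\|T_\lambda x - T_\lambda y\|\leq a\,\|x-T_\lambda x\|^{\alpha}\,\|y-T_\lambda y\|^{1-\alpha},\qquad x,y\in X.
\end{equation*}
Taking any sequence $\{x_n\}\subset X$ with $\|T_\lambda x_n - x_n\|\to 0$ and using $x^*=T_\lambda x^*$, the triangle inequality gives
\begin{equation*}
\|x_n-x^*\|\leq \|x_n-T_\lambda x_n\|+\|T_\lambda x_n - T_\lambda x^*\|\leq \|x_n-T_\lambda x_n\|+a\|x_n-T_\lambda x_n\|^{\alpha}\|x^*-T_\lambda x^*\|^{1-\alpha}.
\end{equation*}
Since $\|x^*-T_\lambda x^*\|=0$, the second term vanishes, so $\|x_n-x^*\|\leq \|x_n-T_\lambda x_n\|\to 0$, which is precisely well-posedness of $FPP(T_\lambda)$.

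Finally, I would invoke the observation already recorded in the paper that $FPP(T)$ is well-posed if and only if $FPP(T_\lambda)$ is well-posed (a consequence of $Fix(T)=Fix(T_\lambda)$ together with $T_\lambda x - x = \lambda(Tx-x)$, so that $\|T x_n - x_n\|\to 0$ is equivalent to $\|T_\lambda x_n - x_n\|\to 0$). This finishes the proof. There is no real obstacle here; the only mildly delicate point is handling the degenerate case $b=0$, where $\lambda=1$, $T_\lambda=T$, and the inequality used above is simply \eqref{b}, so the argument goes through unchanged.
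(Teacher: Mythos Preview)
Your proposal is correct and follows essentially the same approach as the paper: the paper's own proof of this corollary is the single line ``Following arguments similar to those in the proof of Theorem~\ref{math}, the result follows,'' and you have simply spelled out those arguments in full, together with the observation $T_\lambda x - x = \lambda(Tx-x)$ that justifies the equivalence of well-posedness for $T$ and $T_\lambda$.
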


\begin{proof}
Following arguments similar to those in the proof Theorem \ref{math}, the
result follows.
\end{proof}

\subsection{Perodic Point Result}

\ Clearly, a fixed point $x^{\ast }$ of $T$ is also a fixed point of $T^{n}$
for every $n\in \mathbb{N}$. However, the converse is false. For example, if
we take, $X=[0,1]$ and define an operator $T$ \ on $X$ by $Tx=1-x.$ Then $T$
has a unique fixed point $1/2,$ and for each even integer $n,$ $n^{th}$-iterate
of $T$ is an identity map and hence every point of $[0,1]$ is a fixed point
of $T^{n}$. Also, if $X=[0,\pi ],$ $Tx=\cos x,$ then every iterate of $T$
has the same fixed point as $T.$\newline
If a map T satisfies $Fix(T)=Fix(T^{n})$ for each $n\in \mathbb{N}$, then it
is said to have a periodic point property $P$ \cite{Jeong}.\newline
Since $Fix(T)=Fix(T_{\lambda })$, we conclude that the mapping $T$ has
property $P$ if and only the mapping $T_{\lambda }$ has property $P.$

\begin{thm}
\label{math2} Let $(X,\left\Vert .\right\Vert )$ be a Banach space. Suppose
that $T$ is an operator on $X$ as in the Theorem \ref{main}. Then $T$ has
property $P.$
\end{thm}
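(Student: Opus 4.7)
The plan is to exploit the author's observation, stated just before the theorem, that $T$ has property $P$ if and only if $T_\lambda$ does, where $\lambda=\frac{1}{b+1}$. Thus it suffices to prove $Fix(T_\lambda^n)=Fix(T_\lambda)$ for every $n\in\mathbb{N}$. The inclusion $Fix(T_\lambda)\subseteq Fix(T_\lambda^n)$ is immediate, so the content lies in the reverse inclusion.

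The core idea is to use the contractive estimate (\ref{ri}) established inside the proof of Theorem \ref{main}, namely
\begin{equation*}
\left\Vert x_{k+1}-x_{k}\right\Vert \leq a\left\Vert x_{k}-x_{k-1}\right\Vert ,
\end{equation*}
which is valid for \emph{any} Picard orbit of $T_\lambda$, because its derivation rests only on inequality (\ref{3}), which holds for all pairs $x,y\in X$. So the estimate will still be available along an orbit that turns out to be periodic.

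I would then fix $x\in Fix(T_\lambda^n)$ and consider the orbit $x_k:=T_\lambda^k x$, $k\geq 0$. Since $x_n=T_\lambda^n x=x=x_0$, the sequence $\{x_k\}$ is $n$-periodic, so in particular $x_{n+1}=T_\lambda x_n=T_\lambda x_0=x_1$. Iterating (\ref{ri}) once around a full period yields
\begin{equation*}
\left\Vert x_{1}-x_{0}\right\Vert =\left\Vert x_{n+1}-x_{n}\right\Vert \leq a^{n}\left\Vert x_{1}-x_{0}\right\Vert .
\end{equation*}
Because $a\in[0,1)$ and $n\geq 1$, this forces $\left\Vert x_{1}-x_{0}\right\Vert =0$, i.e., $T_\lambda x=x$, so $x\in Fix(T_\lambda)$. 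Combined with the trivial inclusion, this gives $Fix(T_\lambda^n)=Fix(T_\lambda)$ for all $n$, hence $T_\lambda$ has property $P$, and by the equivalence so does $T$.

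There is no serious obstacle in this argument; the only delicate point is that the estimate (\ref{ri}) must be invoked for the Picard orbit generated by a periodic point rather than by an arbitrary initial guess, and one should note explicitly that nothing in its derivation uses the orbit being Cauchy or convergent. The completeness hypothesis of Theorem \ref{main}, supplied here by $X$ being a Banach space, plays no direct role in this proof beyond ensuring the existence of the fixed point $x^{\ast}$ referenced by property $P$.
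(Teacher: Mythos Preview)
Your proposal is correct and follows essentially the same approach as the paper: both arguments take a point in $Fix(T_\lambda^n)$, apply the contractive estimate on consecutive iterates (the paper re-derives it from (\ref{3}) as inequality (\ref{aza}), while you simply cite (\ref{ri})), iterate it $n$ times around the periodic orbit to obtain $\Vert y^*-T_\lambda y^*\Vert\leq a^n\Vert y^*-T_\lambda y^*\Vert$, and conclude. Your write-up is arguably cleaner in making explicit the reduction to $T_\lambda$ and the periodicity of the orbit.
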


\begin{proof}
From Theorem \ref{main}, $T$ has a fixed point. Let $y^{\ast }\in
Fix(T^{n}). $ Now from (\ref{3}), we have
\begin{align*}
\left\Vert y^{\ast }-T_{\lambda }y^{\ast }\right\Vert & =\left\Vert
T_{\lambda }^{n}y^{\ast }-T_{\lambda }\big(T_{\lambda }^{n}y^{\ast }\big)%
\right\Vert \\
& =\left\Vert T_{\lambda }\big(T_{\lambda }^{n-1}y^{\ast }\big)-T_{\lambda }%
\big(T_{\lambda }^{n}y^{\ast }\big)\right\Vert \\
& \leq a\left\Vert T_{\lambda }^{n-1}y^{\ast }-T_{\lambda }^{n}y^{\ast
}\right\Vert ^{\alpha }\left\Vert T_{\lambda }^{n}y^{\ast }-T_{\lambda
}^{n+1}y^{\ast }\right\Vert ^{1-\alpha },
\end{align*}%
that is,
\begin{equation}
\left\Vert T_{\lambda }^{n}y^{\ast }-T_{\lambda }^{n+1}y^{\ast }\right\Vert
^{\alpha }\leq a\left\Vert T_{\lambda }^{n-1}y^{\ast }-T_{\lambda
}^{n}y^{\ast }\right\Vert ^{\alpha }.  \label{aza}
\end{equation}%
Since $\alpha \in (0,1),$ then (\ref{aza}) becomes
\begin{align*}
\left\Vert y^{\ast }-T_{\lambda }y^{\ast }\right\Vert & =\left\Vert
T_{\lambda }^{n}y^{\ast }-T_{\lambda }^{n+1}y^{\ast }\right\Vert \leq
a\left\Vert T_{\lambda }^{n-1}y^{\ast }-T_{\lambda }^{n}y^{\ast }\right\Vert
\leq \\
& \leq a^{2}\left\Vert T_{\lambda }^{n-2}y^{\ast }-T_{\lambda }^{n-1}y^{\ast
}\right\Vert \leq \dotsc \leq a^{n}\left\Vert y^{\ast }-T_{\lambda }y^{\ast
}\right\Vert .
\end{align*}%
Now, $0\leq a<1$ implies that $\left\Vert y^{\ast }-T_{\lambda }y^{\ast
}\right\Vert =0$ and hence $y^{\ast }=Ty^{\ast }.$
\end{proof}

\subsection{Ulam-Hyers Stability}

Let $(X,d)$ be a metric space, $T:X\rightarrow X$ and $\epsilon >0$ . A
point $w^{\ast }\in X$ called an $\epsilon $-solution of the fixed point
problem $FPP(T)$ if $w^{\ast }$ satisfies the following inequality%
\begin{equation*}
d(w^{\ast },Tw^{\ast })\leq \epsilon .
\end{equation*}

Let us recall the notion of Ulam-Hyers stability.

\begin{defn}
\cite{Sintunavarat} Let $(X,d)$ be a metric space, $T:X\rightarrow X$ and $%
\epsilon >0$. The fixed point problem $FPP(T)$ is called generalized
Ulam-Hyers stable if and only if there exists an increasing and continuos
function $\phi :[0,\infty )\rightarrow \lbrack 0,\infty )$ with $\phi (0)=0$
such that for each $\epsilon $-solution $w^{\ast }\in X$ of the fixed point
equation $Tx=x$ $,$ there exists a solution $x^{\ast }$ of $Tx=x$ in $X$
such that
\begin{equation*}
d(x^{\ast },w^{\ast })\leq \phi (\epsilon ).
\end{equation*}
\end{defn}

\begin{rem}
If the function $\phi $ in the above definition is given by $\phi (t)=mt$
for all $t\geq 0$, where $m>0,$ then the fixed point equation $Tx=x$ is said
to be Ulam-Hyers stable.
\end{rem}

\begin{thm}
\label{math3} Let $(X,\left\Vert \cdot \right\Vert )$ be a Banach space.
Suppose that $T$ is an operator on $X$ as in the Theorem \ref{main}. Then
the fixed point problem is Ulam-Hyers stable.
\end{thm}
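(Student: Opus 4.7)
The strategy is to reduce everything to the averaged operator $T_{\lambda}$ with $\lambda=\frac{1}{b+1}$ and then reuse the one-step bound already established in the proof of Theorem \ref{math}. By Corollary \ref{Shak}, $T$ has a unique fixed point $x^{\ast}$, and since $Fix(T)=Fix(T_{\lambda})$, it is also the unique fixed point of $T_{\lambda}$. Given any $\epsilon$-solution $w^{\ast}\in X$ of $FPP(T)$, i.e., $\left\Vert w^{\ast}-Tw^{\ast}\right\Vert \leq \epsilon$, the elementary identity
\begin{equation*}
w^{\ast}-T_{\lambda}w^{\ast}=w^{\ast}-(1-\lambda)w^{\ast}-\lambda Tw^{\ast}=\lambda(w^{\ast}-Tw^{\ast})
\end{equation*}
shows that $\left\Vert w^{\ast}-T_{\lambda}w^{\ast}\right\Vert \leq \lambda \epsilon$, so that $w^{\ast}$ is automatically a $\lambda\epsilon$-solution of $FPP(T_{\lambda})$.

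Next, I would apply the contractive-type inequality (\ref{3}) for $T_{\lambda}$ with $x:=w^{\ast}$ and $y:=x^{\ast}$. The triangle inequality gives
\begin{equation*}
\left\Vert w^{\ast}-x^{\ast}\right\Vert \leq \left\Vert w^{\ast}-T_{\lambda}w^{\ast}\right\Vert +\left\Vert T_{\lambda}w^{\ast}-T_{\lambda}x^{\ast}\right\Vert \leq \left\Vert w^{\ast}-T_{\lambda}w^{\ast}\right\Vert +a\left\Vert w^{\ast}-T_{\lambda}w^{\ast}\right\Vert ^{\alpha}\left\Vert x^{\ast}-T_{\lambda}x^{\ast}\right\Vert ^{1-\alpha}.
\end{equation*}
Because $x^{\ast}=T_{\lambda}x^{\ast}$ and $1-\alpha>0$, the factor $\left\Vert x^{\ast}-T_{\lambda}x^{\ast}\right\Vert ^{1-\alpha}$ vanishes, the interpolation term disappears, and we are left with $\left\Vert w^{\ast}-x^{\ast}\right\Vert \leq \left\Vert w^{\ast}-T_{\lambda}w^{\ast}\right\Vert \leq \lambda \epsilon$.

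To finish, I would define $\phi(t):=\frac{t}{b+1}$. This function is continuous, increasing, and satisfies $\phi(0)=0$, while the chain of estimates above yields $\left\Vert x^{\ast}-w^{\ast}\right\Vert \leq \phi(\epsilon)$. By the Remark following the definition of Ulam-Hyers stability, this is precisely Ulam-Hyers stability with constant $m=\frac{1}{b+1}>0$. The only delicate point is the boundary behaviour of (\ref{3}) at the fixed point, where one must rely on the convention $0^{1-\alpha}=0$ to annihilate the contractive term -- exactly the same device already used in the well-posedness proof of Theorem \ref{math} -- so the argument presents no genuine obstacle beyond a careful bookkeeping of the factor $\lambda$ passing from $T$ to $T_{\lambda}$.
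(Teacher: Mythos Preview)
Your argument is correct and follows the same route as the paper: pass to the averaged operator $T_{\lambda}$, use the triangle inequality together with the contractive estimate (\ref{3}), and exploit $x^{\ast}=T_{\lambda}x^{\ast}$ so that the interpolative term vanishes. The only difference is bookkeeping: the paper declares the fixed point problems for $T$ and $T_{\lambda}$ equivalent and works directly with an $\epsilon$-solution of $T_{\lambda}x=x$, arriving at $\left\Vert x^{\ast}-w^{\ast}\right\Vert \leq \epsilon$ (so $m=1$), whereas you start from an $\epsilon$-solution of $Tx=x$, track the factor $\lambda$ through the identity $w^{\ast}-T_{\lambda}w^{\ast}=\lambda(w^{\ast}-Tw^{\ast})$, and obtain the sharper constant $m=\frac{1}{b+1}$.
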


\begin{proof}
Since $Fix(T)=Fix(T_{\lambda }),$ it follows that the fixed point problem $%
Tx=x$ is equivalent to the fixed point problem
\begin{equation}
x=T_{\lambda }x.  \label{stabTlambda}
\end{equation}%
Let $w^{\ast }$ be $\epsilon $-solution of the fixed point equation (\ref%
{stabTlambda}), that is,
\begin{equation}
d(w^{\ast },T_{\lambda }w^{\ast })\leq \epsilon .  \label{newwwww}
\end{equation}%
Using (\ref{3}) and (\ref{newwwww}), we get
\begin{align*}
\left\Vert x^{\ast }-w^{\ast }\right\Vert & =\left\Vert T_{\lambda }x^{\ast
}-w^{\ast }\right\Vert \leq \left\Vert T_{\lambda }x^{\ast }-T_{\lambda
}w^{\ast }\right\Vert +\left\Vert T_{\lambda }w^{\ast }-w^{\ast }\right\Vert
\\
& \leq a\left\Vert x^{\ast }-T_{\lambda }x^{\ast }\right\Vert ^{\alpha
}\left\Vert w^{\ast }-T_{\lambda }w^{\ast }\right\Vert ^{1-\alpha }+\epsilon
\\
& =\epsilon .
\end{align*}
\end{proof}

\section{Application to Variational inequality problem}

Variational inequality theory provides some important tools to handle the
problems arising in economic, engineering, mechanics, mathematical
programming, transportation and others. Many numerical methods have been
constructed for solving variational inequalities and optimization problems.
The aim of this section is to present generic convergence theorems for
Krasnoselskii type algorithms that solve variational inequality problems.
\newline

Let $H$ be a real Hilbert space with inner product $\big<\cdot ,\cdot \big>$%
, $C\subset H$ a closed and convex set and $S:H\rightarrow H$. \newline
The variational inequality problem with respect to $S$ and $C,$ denoted by $%
VIP(S,C),$ is to find $x^{\ast }\in C$ such that
\begin{equation*}
\big<Sx^{\ast },x-x^{\ast }\big>\geq 0,\ \forall \ x\in H.
\end{equation*}%
It is well known \cite{Byrne} that if $\gamma >0,$ then $x^{\ast }\in C$ is
a solution of $VIP(S,C)$ if and only if $x^{\ast }$ is a solution of fixed
point problem of $P_{C}\circ (I-\gamma S),$ where $P_{C}$ is the nearest
point projection onto $C.$\newline
Amongst many others results in \cite{Byrne}, it was proved that if $I-\gamma
S$ and $P_{C}\circ (I-\gamma S)$ are averaged nonexpansive operators, then,
under some additional assumptions, the iterative algorithm $%
\{x_{n}\}_{n=0}^{\infty }$ defined by
\begin{equation*}
x_{n+1}=P_{C}(I-\gamma S)x_{n},\ n\geq 0,
\end{equation*}%
converges weakly to a solution of the $VIP(S,C),$ if such solutions exist.%
\newline
In the case of averaged nonexpansive mappings, the problem of replacing the
weak convergence in the above result with strong convergence has received a
much attention of researchers. \newline
Our alternative is to consider $VIP(S,C)$ for enriched interpolative Kannan
type contraction operators, which are in general discontinuous operators in
contrast of nonexpansive operators which are always continuous. \newline
In this case, we shall have $VIP(S,C)$ with a unique solution, as shown in
the next theorem. Moreover, the considered algorithm (\ref{Viit}) will
converge strongly to the solution of the $VIP(S,C).$

\begin{thm}
Assume that for $\gamma >0$, $P_{C}(I-\gamma S)$ is enriched interpolative
Kannan type contraction operator. Then there exists $\lambda \in (0,1]$ such
that the iterative algorithm $\{x_{n}\}_{n=0}^{\infty }$ defined by
\begin{equation}
x_{n+1}=(1-\lambda )x_{n}+\lambda P_{C}(I-\gamma G)x_{n},\ n\geq 0,
\label{Viit}
\end{equation}%
converges strongly to the unique solution $x^{\ast }$ of the $VIP(S,C)$ for
any $x_{0}\in C.$

\begin{proof}
Since $C$ is closed, we take $X:=C$ and $T:=P_{C}(I-\gamma S)$ and apply
Corollary \ref{Shak}.
\end{proof}
\end{thm}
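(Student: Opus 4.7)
The plan is to invoke Corollary \ref{Shak} with the ambient space replaced by the closed convex subset $C$ of the Hilbert space $H$, and then translate the unique fixed point back into a solution of the variational inequality. Concretely, I would set $T := P_C \circ (I - \gamma S)$, which by construction maps $H$ into $C$ and so restricts to a self-map $T : C \to C$. The hypothesis says that $T$ is an $(a,b,\alpha)$-enriched interpolative Kannan type operator, and I would choose $\lambda := \frac{1}{b+1} \in (0,1]$ exactly as in the proof of Theorem \ref{main}.

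Next, I would re-run the argument of Theorem \ref{main} on the set $C$. Since $C$ is convex, the averaged operator $T_\lambda = (1-\lambda)I + \lambda T$ is also a self-map of $C$, so the Krasnoselskii orbit $\{x_n\}$ starting from any $x_0 \in C$ remains in $C$. The enriched condition rewrites as
\begin{equation*}
\|T_\lambda x - T_\lambda y\| \le a \|x - T_\lambda x\|^{\alpha} \|y - T_\lambda y\|^{1-\alpha}
\end{equation*}
for all $x, y \in C$, and the same telescoping argument yields $\|x_{n+1} - x_n\| \le a^n \|x_1 - x_0\|$, so $\{x_n\}$ is Cauchy in $H$. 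Because $C$ is closed in the complete space $H$, the sequence converges strongly to some $x^* \in C$, and passing to the limit in the contractive inequality (as in the proof of Theorem \ref{main}) shows $T_\lambda x^* = x^*$, equivalently $Tx^* = x^*$. Uniqueness of such a fixed point is inherited from the same computation.

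Finally, I would invoke the Byrne characterization recalled just before the statement: for $\gamma > 0$, a point $x^* \in C$ solves $VIP(S,C)$ if and only if $x^* = P_C(I - \gamma S)x^*$, that is, $x^* = T x^*$. Hence the unique fixed point produced in the previous paragraph is the unique solution of $VIP(S,C)$, and the iteration (\ref{Viit}) converges strongly to it.

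The only subtle point, and the one I would expect to need the most care, is that $C$ is not itself a normed linear space, so Corollary \ref{Shak} cannot be applied as a pure black box: one has to verify that the proof of Theorem \ref{main} transfers to a closed convex subset of a Hilbert space. This is a routine check once three ingredients are observed, namely (i) the enriched inequality and the affine combination defining $T_\lambda$ make sense because $C$ sits inside the vector space $H$, (ii) the convexity of $C$ ensures $T_\lambda(C) \subseteq C$ so that iteration is well defined, and (iii) the closedness of $C$ in $H$ provides the completeness required to guarantee that Cauchy orbital sequences converge inside $C$.
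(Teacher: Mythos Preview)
Your proposal is correct and follows essentially the same route as the paper: set $T := P_C(I-\gamma S)$, invoke the fixed-point machinery of Corollary~\ref{Shak}/Theorem~\ref{main}, and identify the resulting fixed point with the unique solution of $VIP(S,C)$ via the Byrne characterization. You are in fact more careful than the paper's one-line proof, which simply writes ``take $X:=C$'' and applies Corollary~\ref{Shak} without addressing that $C$ is not itself a linear space; your explicit check that convexity keeps $T_\lambda$ a self-map of $C$ and that closedness of $C$ in $H$ supplies the needed completeness fills precisely that gap.
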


\begin{exmp}
Let $X=\mathbb{R}^{2}$ and for any $x=(x_{1},x_{2})$ and $y=(y_{1},y_{2})$
in $X$, the inner product is defined by
\begin{equation*}
\big<x,y\big>=x_{1}y_{1}+x_{2}y_{2}.
\end{equation*}%
Then $X$ equipped with the above inner product is a Hilbert space. The above
inner product gives the norm given by%
\begin{equation*}
\left\Vert x\right\Vert =(<x,y>)^{1/2}.
\end{equation*}
\newline
\ Define $S:X\rightarrow X$ by
\begin{equation*}
S(x)=\frac{(1,0)+x}{\gamma }\ \ \forall \ x\in X,
\end{equation*}%
where $\gamma >0$ be fixed real number. \newline
For a mapping $P_{C}:X\rightarrow C$ defined by
\begin{equation*}
P_{C}(x)=\left\{
\begin{array}{ll}
\frac{x}{\left\Vert x\right\Vert } & ;x\notin C \\
x & ;x\in C,%
\end{array}%
\right.
\end{equation*}%
where $C=\{x\in X:\left\Vert x\right\Vert \leq 1\},$ it is easy to check
that $P_{C}(I-\gamma S)$ is a $(a,0,\alpha )$ enriched interpolative Kannan
type contraction. \newline
By Corollary \ref{Shak}, $P_{C}(I-\gamma S)$ has a unique solution, which in
turns a solution for $VIP(S,C).$
\end{exmp}

\section{Conclusions}

\begin{enumerate}
\item We introduced a large class of contractive operators, called enriched
interpolative Kannan type operators, that includes usual interpolative
Kannan type operators and enriched Kannan operators.

\item We presented examples to show that the class of enriched interpolative
Kannan type operators strictly includes the interpolative Kannan type
operator in the sense that there exist operators which are not interpolative
Kannan type and belong to the class of enriched interpolative Kannan type
operators.

\item We studied the set of fixed point (Theorem \ref{main}) and constructed
an algorithm of Krasnoselskii type in order to approximate fixed point of
enriched interpolative Kannan type operators and we proved a strong
convergence theorem.

\item We also obtained Theorems \ref{math}, \ref{math2} and \ref{math3} for
a well-posedness, perodic point and Ulam-Hyers stability problem of the
fixed point problem for enriched interpolative Kannan type operators,
respectively.

\item As application of our main results (Corollary \ref{Shak}), we
presented two Krasnoselskii projection type algorithms for solving
variational inequality problems for the class of enriched interpolative
Kannan type operators, thus improving the existence and weak convergence
results for variational inequality problems in \cite{Byrne} to existence and
uniqueness as well as to strong convergence theorems.\newline
\end{enumerate}

\end{document}